\newcommand{\real}{\mathbb{R}}
\newcommand{\x}{\xi}
\newcommand{\jap}[1]{\left\langle #1 \right\rangle}
\newcommand{\subscript}[2]{$#1 _ #2$}
\numberwithin{equation}{section}
\newcommand{\be}{\begin{equation}}
\newcommand{\ee}{\end{equation}}
\DeclareMathOperator{\sgn}{\mathrm{sgn}}
\theoremstyle{plain}
\newtheorem{thm}{Theorem}
\newtheorem*{thm*}{Theorem}
\newtheorem{assumpt}{Assumption}
\newtheorem{lem}{Lemma}
\theoremstyle{definition}
\theoremstyle{remark}
\newtheorem{nb}{Remark}
\def\blfootnote{\xdef\@thefnmark{}\@footnotetext}
\title{Nonlinear smoothing and unconditional uniqueness for the Benjamin-Ono equation in weighted Sobolev spaces}
\date{}
\author{Simão Correia}
\begin{document}
\maketitle

\begin{abstract}
	We consider the Benjamin-Ono equation on the real line for initial data in weighted Sobolev spaces. After the application of the gauge transform, the flow is shown to be Lipschitz continuous and to present a nonlinear smoothing effect. As a consequence, unconditional uniqueness for the Benjamin-Ono equation is proved. 
	\vskip10pt
	\noindent\textbf{Keywords}: Benjamin-Ono equation; nonlinear smoothing; unconditional uniqueness.
	\vskip10pt
	\noindent\textbf{AMS Subject Classification 2010}: 35Q35, 35A02, 35B65, 42B37.
\end{abstract}

\section{Introduction}
In this paper, we consider the Benjamin-Ono equation on the real line,
\begin{equation}\label{eq:bo}
\partial_tu + H\partial_{x}^2 u = \partial_x(u^2), \quad (t,x)\in [0,T]\times\real,
\end{equation}
where $H$ denotes the Hilbert transform, defined through the Fourier transform as
$$
\widehat{Hf}(\xi)=-i\sgn(\xi)\hat{f}(\xi), \quad \xi\in \real.
$$
The Benjamin-Ono equation models the propagation of unidirectional deep water waves \cite{benjamin, ono}. As a nonlinear dispersive equation, it is nonlocal, completely integrable and the nonlinear term presents a loss of derivative. In fact, since the linear dispersion effects are quite weak, one cannot handle the nonlinearity perturbatively in order to prove local well-posedness results for initial data $u_0\in H^s(\real)$. This was proven rigorously by Molinet, Saut and Tzvetkov \cite{molinetsauttzevtkov} and improved by Koch and Tzvetkov \cite{kochtzevtkov1}: the flow map is shown not to be $C^2$ for $s\in \real$, or even uniformly continuous for $s>0$.

A first step in the local well-posedness theory was given by Iorio \cite{iorio} for $s>3/2$. Several refinements ensued: Ponce \cite{ponce} for $s=3/2$, Koch-Tzevtkov \cite{kochtzevtkov2} for $s>5/4$ and Kenig-Koenig \cite{kenigkoenig} for $s>9/8$. Later, using a variant of the Hopf-Cole transform (connected to the Burgers equation), 
$$
w\sim\frac{1}{2i}ue^{-i\partial_{x}^{-1}u},
$$
Tao \cite{tao} noticed that the worst interactions in the nonlinearity disappear, allowing for a rather direct proof of  local well-posedness in $H^1(\real)$. The method was later improved by Burq-Planchon \cite{burqplanchon} for $s>1/4$ and by Ionescu-Kenig \cite{ionescukenig} for $s\ge 0$ (later revisited by Molinet-Pilod \cite{molinetpilod} and by Ifrim-Tataru \cite{ifrimtataru}). One of the main difficulties is the transfer of bounds from $u$ to $w$ (and vice-versa), which has been handled either through paralinearization or by decomposing the solution into low and high frequencies. It is worth mentioning that in \cite{ifrimtataru}, the authors prove the local well-posedness in the weighted space $L^2((1+x^2)dx)$ (see also \cite{fonsecalinaresponce}). Finally, in \cite{molinetpilod}, the solution is shown to be unique for $s>1/4$, while conditional uniqueness (that is, under the assumption that the solution belongs to some auxiliary space) holds in the class $L^\infty((0,T), H^s(\real))\cap L^4((0,T), W^{s,4}(\real))$ for $s>0$. In the periodic case, unconditional uniqueness has been proven in \cite{kishimoto1} for $s>1/6$. 

The main problem we wish to study in the context of the Benjamin-Ono equation is the nonlinear smoothing phenomena: the difference between the nonlinear and the free evolutions starting from the same initial data is in fact smoother than the initial data. For generic dispersive equations of the form $u_t+iL(D)u=N(u)$, one aims to derive the estimate
$$
\| u(t) - e^{-itL(D)}u_0\|_{L^\infty(H^{s+\epsilon}(\real))}\le C(t, \|u\|_{L^\infty((0,t), H^s(\real))}).
$$
This feature has been initially discovered by Bona and Saut \cite{BS1} in the context of generalized KdV equations and extended to many other contexts (see \cite{babin, bourgain, tzirakis6, tzirakis3, keraani, linaresscialom}, among others), by using either maximal estimates, Bourgain spaces or integration by parts in the time variable. Recently, the author and Silva \cite{CorreiaSilva} derived a unifying strategy, based on the infinite normal form reduction (INFR), to prove this phenomena for general dispersive equations and applied it to several classical examples, such as the Korteweg-de Vries and the modified Zakharov-Kuznetsov equations. In the context of the Benjamin-Ono equation on the real line, there are no results concerning this property (see \cite{HKO} for a  smoothing effect outside of the origin under high regularity and decay assumptions).

The INFR has been introduced in \cite{guo, ko, koy}. The main idea is to consider the profile of a solution $u$, $\tilde{u}(t)=e^{-itL(D)}u(t)$, whose equation concentrates all the dispersive information in an oscillatory integral. By integrating by parts in time (an idea used in \cite{babin} and \cite{tzirakis3}), one expects to gain some regularity (coming from the phase function) at the expense of having time derivatives in the nonlinearity, which, upon replacement, yield higher-order terms. Afterwards, one may integrate by parts once again, for as many times as one so desires. The INFR is simply the formal construction of an infinite iteration of this argument. 

Evidently, if the phase is stationary in time, the argument is not as direct, requiring a decoupling into resonant and nonresonant parts. This is reminiscent of the space-time resonances methodology presented in \cite{germain, masmoudi}. The main difference in their argument is the use of vector-fields to deal with time resonances, while the INFR uses the resonance condition as a restriction on the domain of the oscillatory integral. However, as one may observe in \cite{CorreiaSilva} and in Section 4, the INFR analysis ultimately hinges on the study of space-time resonances. One may even argue that the INFR is both a refinement and a simplification of the method of \cite{germain, masmoudi}. 

The INFR method has also been used by Kishimoto in \cite{kishimoto1, kishimoto2} (see also \cite{gebalin}) to prove unconditional uniqueness at low regularity. Indeed, the advantage of working with an infinite expansion in arbitrarily high-order terms is that the analysis can be carried out without the use of any auxiliary space. Moreover, it is worth pointing out that in all nonlinear smoothing results for one-dimensional dispersive equations presented in \cite{CorreiaSilva} and in the present paper, the critical regularity for local well-posedness is reached. We believe this fact is not random and further research into this matter is necessary.

As expected, we are unable to prove nonlinear smoothing directly for \eqref{eq:bo}: a consequence of our methodology is a Lipschitz continuity estimate for the flow, which would contradict \cite{kochtzevtkov1}. We proceed as for the derivative Schrödinger equation (see \cite{CorreiaSilva, tzirakis6}) and prove the phenomenon for the gauged version of \eqref{eq:bo}, inspired in  \cite{kishimoto1} and \cite{tao}. Through the INFR, we are able to prove nonlinear smoothing and unconditional uniqueness for the Benjamin-Ono equation in weighted Sobolev spaces.
\begin{thm}\label{teo:principal}
	Fix $s>0$. Define
	$$
	\Sigma=\left\{u \in H^s(\real): xu\in L^2,\ \hat{u}(0)=0 \right\}.
	$$
	\begin{enumerate}
		\item 	Given $u_0\in \Sigma$, there exists $T>0$ and a unique local solution of \eqref{eq:bo}
		$$
		u\in C([0,T], H^s(\real))\cap L^\infty((0,T), \Sigma)
		$$ with $u(0)=u_0$.
		\item The gauge mapping $\mathcal{G}: \Sigma \to  H^{s+1}(\real)$
		$$
		u \mapsto \mathcal{G}u=e^{-i\partial_x^{-1}u/2}-1
		$$
		is well-defined and it is continuous.
		\item Given solutions $u, v\in C([0,T], \Sigma)$ of \eqref{eq:bo}, one has the local Lipschitz estimate
		$$
		\|\mathcal{G}(u) - \mathcal{G}(v)\|_{L^\infty((0,T), H^{s+1})}\lesssim C\left(T,\|\mathcal{G}(u(0))\|_{H^{s+1}}, \|\mathcal{G}(v(0))\|_{ H^{s+1}}\right)\|\mathcal{G}(u(0)) - \mathcal{G}(v(0))\|_{ H^{s+1}}
		$$
		Moreover, for $\epsilon<\max\{ s, 3/4 \}$, 
		$$
		\|\mathcal{G}(u) - e^{-tH\partial_{x}^2}\mathcal{G}(u(0))\|_{L^\infty((0,T), H^{s+\epsilon+1})}\lesssim C(T, \|\mathcal{G}(u)\|_{L^\infty((0,T),H^{s+1})}).
		$$
	\end{enumerate}

\end{thm}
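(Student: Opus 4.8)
The plan is to transfer the entire problem to the gauge variable $w=\mathcal{G}(u)=e^{-i\partial_x^{-1}u/2}-1$, to establish the two estimates in part (3) first, and then to deduce parts (1) and (2) from them. Writing $\Phi=\partial_x^{-1}u$ (well defined on $\Sigma$ precisely because $\hat u(0)=0$), the pointwise identity $\partial_x w=-\tfrac{i}{2}u(w+1)$ shows that differentiating the gauge returns $u$ up to a bounded factor, which is the mechanism behind the one-derivative gain $\Sigma\to H^{s+1}$. Combining this with $\partial_t\Phi=-H\partial_x u+u^2$ (obtained by applying $\partial_x^{-1}$ to \eqref{eq:bo}), I would compute the renormalized equation for $w$, which takes the schematic form $\partial_t w+H\partial_x^2 w=N(w)$, where Tao's algebraic cancellation removes the worst quadratic self-interaction and leaves a nonlinearity $N$ that no longer loses a full derivative. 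For part (2), the bound $\|\partial_x w\|_{H^s}\lesssim\|u\|_{H^s}(1+\|w\|_{H^s\cap L^\infty})$, which follows from $\partial_x w=-\tfrac i2 u(w+1)$ and Moser-type estimates, places $\mathcal{G}(u)$ in $H^{s+1}$ once $w\in L^2$ is secured; here the weighted hypothesis $xu\in L^2$ provides exactly the low-frequency decay of $\Phi$ needed to put $e^{-i\Phi/2}-1$ in $L^2$, and continuity follows because $u\mapsto\Phi\mapsto e^{-i\Phi/2}-1$ is a composition of continuous maps.

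For the Lipschitz estimate in part (3), I would take two solutions $u,v$ with gauges $w=\mathcal{G}(u)$, $z=\mathcal{G}(v)$ and subtract their equations to obtain $\partial_t(w-z)+H\partial_x^2(w-z)=N(w)-N(z)$. The decisive point is that the tame structure of $N$ yields a difference estimate $\|N(w)-N(z)\|_{H^{s+1}}\lesssim C(\|w\|_{H^{s+1}},\|z\|_{H^{s+1}})\|w-z\|_{H^{s+1}}$ with no loss of derivatives; an $H^{s+1}$ energy estimate together with Gr\"onwall's inequality then closes the Lipschitz bound with a constant depending on $T$ and the $H^{s+1}$ norms of the initial gauges. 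The same contraction argument, run for the Duhamel map associated with $N$ in $C([0,T],H^{s+1})$, produces the local gauge solution; inverting the gauge via $u=2i\partial_x\log(w+1)$ and propagating the weighted norm by a separate weighted energy estimate (or by invoking the weighted well-posedness of Ifrim-Tataru) recovers part (1).

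The nonlinear smoothing estimate I would prove through the infinite normal form reduction applied to the profile $\widetilde w=e^{tH\partial_x^2}w$, whose Fourier transform satisfies $\partial_t\widehat{\widetilde w}(\xi)=\sum_k\int e^{it\phi_k}m_k\prod_j\widehat{\widetilde w}(\xi_j)\,d\xi_j$, with phases $\phi_k=\xi|\xi|-\sum_j\xi_j|\xi_j|$ under the constraint $\sum_j\xi_j=\xi$. Integrating by parts in time replaces $e^{it\phi}$ by $\phi^{-1}\partial_t e^{it\phi}$, producing a boundary term that is smoother by the size of $\phi^{-1}$ and interior terms in which $\partial_t\widehat{\widetilde w}$ is substituted, generating the higher-order terms that are iterated indefinitely. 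On the nonresonant region the factor $\phi^{-1}$ converts into a gain of $\epsilon$ derivatives, while on the near-resonant region $\{|\phi|\ll 1\}$ I would exploit the gauge-induced vanishing of the multiplier together with the smallness of the resonant set; optimizing this gain against the resonant geometry is what yields the explicit threshold $\epsilon<\max\{s,3/4\}$.

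I expect the resonance analysis to be the main obstacle. Because the Benjamin-Ono dispersion relation $\xi|\xi|$ is only piecewise smooth and of homogeneity two, its resonance function $\Omega(\xi_1,\xi_2)=(\xi_1+\xi_2)|\xi_1+\xi_2|-\xi_1|\xi_1|-\xi_2|\xi_2|$ vanishes on a nontrivial set whenever $\xi_1,\xi_2$ carry opposite signs, so the near-resonant contributions cannot be tamed by the phase alone and must be absorbed by the very structure the gauge was designed to supply; the $3/4$ appearing in the threshold is precisely the limit of the smoothing extractable from a single normal-form step before this geometry obstructs further gain. The second technical difficulty is establishing the multilinear bounds uniformly in the order $k$ so that the infinite series of iterated terms converges, which requires controlling the combinatorial growth in the number of terms by the regularity and smallness of $\widetilde w$.
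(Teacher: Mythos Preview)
Your outline for part (2) and the nonlinear smoothing via INFR is sound and matches the paper's strategy. The genuine gap is in your treatment of the Lipschitz estimate (and, by the same token, of the contraction argument for existence). After Tao's gauge, the nonlinearity does \emph{not} become tame in $H^{s+1}$: writing, as in the paper,
\[
Q_+(V)=-P_{+hi}\bigl(V_+\,P_-\partial_x^2 V\bigr),\qquad C_+(V)=-P_{+hi}\bigl(V_+\,P_-\partial_x(\bar V\,\partial_x V)\bigr),
\]
one sees two full derivatives on one factor. The frequency restriction $\xi_1>\xi>1$ lets you shift the \emph{output} weight $\langle\xi\rangle^{s+1}$ onto $V_+$, but it still leaves $|\xi-\xi_1|^2$ on the other factor, and for $V\in H^{s+1}$ with $0<s<1$ this is one derivative too many. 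Hence the claimed bound $\|N(w)-N(z)\|_{H^{s+1}}\lesssim C\|w-z\|_{H^{s+1}}$ fails precisely in the low-regularity range the theorem targets, and a pure energy/Gr\"onwall argument cannot close. For the same reason a Picard contraction in $C([0,T],H^{s+1})$ will not run; this is consistent with the Koch--Tzvetkov ill-posedness results, which rule out $C^2$ dependence for the original flow and reflect that even the gauged problem needs more than a naive fixed-point.

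In the paper, the Lipschitz bound and unconditional uniqueness are not proved by energy methods at all: they are \emph{outputs of the same INFR machinery} used for smoothing. One verifies frequency-restricted estimates of the form \eqref{eq:frequencyrestrictedgen} for $Q_\pm$ and $C_\pm$ (Lemmas~\ref{lem:cubic} and~\ref{lem:quad}), and the abstract INFR theorem (Theorem~\ref{teo:geral}) then yields simultaneously the Lipschitz dependence, uniqueness, and the $\epsilon$-gain. Existence itself is taken from the prior $L^2$/weighted theory (Ionescu--Kenig, Ifrim--Tataru), as you mention as an alternative. So the fix is to drop the energy/Gr\"onwall step entirely and route the Lipschitz claim through the INFR difference estimates of Lemma~\ref{lem:HsboundsNFE}.
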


The use of a weighted space is necessary in order for the gauge mapping to lie in $H^{s+1}(\real)$. This is a crucial part of our analysis. To lift this restriction, one must find a way to treat the low-frequency terms, possibly as in \cite{ionescukenig}. On the other hand, the necessity of weighted spaces to prove nonlinear smoothing is not surprising, as it has been observed for the KdV equation in \cite{imt}. 

The remainder of this work is organized as follows: in Section 2, we recall the gauge transformation and derive the gauged equation. In Section 3, we give a brief overview of the INFR method and point out the required adaptations for the Benjamin-Ono equation. Finally, in Section 4, we prove the two essential estimates that allow for the application of the INFR, thus concluding the proof of the main theorem. 
%

\section{The gauge transformation}

Given $v\in C([0,T], \mathcal{S}'(\real))$, we define 
$$
\tilde{v}(t):=\widehat{e^{tH\partial_{x}^2}v(t)}.
$$
Given an initial data $u_0\in \Sigma$, \cite[Proposition 7.2]{ifrimtataru} ensures that any solution $u\in C([0,T], H^s(\real))$ satisfies
$$
\|\partial_\xi \tilde{u}(t)\|_{L^2} =\|(x-2tH\partial_x)u(t)\|_{L^2}\lesssim C(t, \|u_0\|_{L^2}, \|xu_0\|_{L^2}).
$$
Moreover, taking into account that
$$
\tilde{u}_t(t,\xi) = i\xi e^{it|\xi|\xi} \widehat{u^2}(t,\xi),
$$
the application of Riemann-Lebesgue lemma shows that $\tilde{u}_t \in C([0,T]\times \real)$. Consequently, $\hat{u}(t,0)= \tilde{u}(t,0)=0$ for $t\in [0,T]$. These properties allow us to define the antiderivative of $u$,
$$
\hat{F}(t,\xi)=\frac{\hat{u}(t,\xi)}{i\xi}, \quad (t,\xi)\in [0,T]\times (\real\setminus\{0\}).
$$
By Hardy's inequality, $F\in L^\infty([0,T], H^{s+1}(\real))$. From \eqref{eq:bo}, we deduce
\begin{equation}\label{eq:eqF}
	F_t + H  F_{xx}= \frac{1}{2}(F_x)^2.
\end{equation}
Take $\phi=\mathbbm{1}_{[-1,1]}$ and define the projections
$$
\widehat{P_\pm f} = \mathbbm{1}_{\pm \xi>0}\hat{f},
$$ 
$$
\widehat{P_{lo} f} = \phi \hat{f},\quad \widehat{P_{hi} f} = (1-\phi) \hat{f}.
$$
We write $P_{\pm lo}=P_{\pm}P_{lo}$ and $P_{\pm hi}=P_{\pm}P_{hi}$ and let $\chi_\cdot$ be the Fourier multiplier corresponding to $P_\cdot$. These projections are well-defined whenever $\hat{f}\in L^1_{loc}$. We define the gauge transform
$$
V=e^{-iF/2}-1,\quad w=\partial_{x}V=ue^{-iF/2},
$$
and thus
\begin{equation}\label{eq:ucomow}
u=we^{iF/2}=(1+\bar{V})\partial_xV.
\end{equation}
Observe that
$$
\|V(t)\|_{H^{s+1}}\le \sum_{n\ge 1} \frac{\|F(t)\|_{H^{s+1}}^n}{2^nn!}\lesssim \|F(t)\|_{H^{s+1}}e^{\|F(t)\|_{H^{s+1}}/2}.
$$
Set
$$
V_\pm = P_{\pm hi}V,\quad V_{lo}=P_{lo}V, \quad w_\pm=P_{\pm hi}w, w_{lo}=P_{lo}w.
$$
It follows from \eqref{eq:eqF} that
\begin{align}
(V_+)_t + H(V_+)_{xx}& = (V_+)_t - i(V_+)_{xx} \\&= P_{+hi}\left[ -\frac{i}{2} e^{-iF/2}\left( F_t - i F_{xx} -  \frac{1}{2}( F_x)^2 \right) \right] \\&= P_{+hi}\left[ - e^{-iF/2}\left( P_- F_{xx} \right) \right]\label{eq:eqV}\\&= P_{+hi}\left[ - (e^{-iF/2}-1)\left( P_- F_{xx} \right) \right]
\\&= -P_{+hi}\left(V_+P_-\partial_{x}((1+\bar{V})\partial_xV)\right)=:Q_+(V)+C_+(V)
\end{align}
where $Q$ is quadratic and $C$ is cubic.
Analogously, the equation in the large negative frequency range is
\begin{equation}\label{eq:eqV2}
(V_-)_t + H(V_-)_{xx}
= -P_{-hi}\left(V_-P_+\partial_{x}((1+\bar{V})\partial_xV)\right)=:Q_-(V)+C_-(V)
\end{equation}
In the low spectrum range, we have
$$
(V_{lo})_t + H(V_{lo})_{xx} = -P_{+lo}\left(V P_-\partial_{x}u\right) -P_{-lo}\left(V P_+\partial_{x}u\right)
$$
Due to the restriction on the frequencies, the two terms are equivalent to $P_{lo}(wu)$. Since $w,u \in L^\infty((0,T), L^2)$, $\widehat{wu}\in L^\infty((0,T)\times\real)$ and $(V_{lo})_t\in L^\infty((0,T), H^\infty(\real))$.

\begin{lem}\label{lem:derivadaslimitadas}
	For $s\ge 0$, one has
	$$
\| (\tilde{V}_\pm)_t\|_{L^\infty((0,T)\times \real)}\lesssim \|V\|_{L^\infty((0,T),H^1)}^2 + \|V\|_{L^\infty((0,T),H^1)}^3.
	$$
\end{lem}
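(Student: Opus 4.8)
The plan is to pass to the Fourier side and reduce everything to pointwise-in-$\xi$ bounds on the Fourier transforms of the quadratic and cubic nonlinearities. Since the symbol of $e^{tH\partial_x^2}$ is $e^{it|\xi|\xi}$, differentiating the profile and using \eqref{eq:eqV}--\eqref{eq:eqV2} gives $(\tilde V_\pm)_t(t,\xi)=e^{it|\xi|\xi}\,\widehat{(Q_\pm(V)+C_\pm(V))}(t,\xi)$, exactly as in the computation of $\tilde u_t$ above. As $|e^{it|\xi|\xi}|=1$, it suffices to bound $\|\widehat{Q_\pm(V)}\|_{L^\infty_\xi}$ and $\|\widehat{C_\pm(V)}\|_{L^\infty_\xi}$ uniformly in $t$, and then take the supremum in $t$. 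I will treat the $+$ case, the $-$ case being identical after reflecting the frequencies. Throughout I write $f(\eta)=\jap{\eta}|\hat V(t,\eta)|$, so that $\|f\|_{L^2}=\|V(t)\|_{H^1}\le\|V\|_{L^\infty_t H^1}$.

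The mechanism that makes the estimate work is the frequency localization. Writing $Q_+(V)$ and $C_+(V)$ as convolutions, the outputs are restricted to $\xi>1$, the highest-frequency input $\xi_1$ satisfies $\chi_{+hi}(\xi_1)$, and the remaining inputs sum to a negative frequency $\eta<0$; since $\xi_1=\xi-\eta=\xi+|\eta|$, one always has $\xi_1>|\eta|$. Thus the derivative produced by the outer $P_-\partial_x$ can be transferred onto the top factor at the cost of $\jap{\xi_1}$. For the quadratic term $Q_+(V)=-P_{+hi}(V_+\,P_-\partial_x^2 V)$, the symbol is $\xi_2^2$ and the constraint gives $\xi_2^2=|\xi_2|^2<|\xi_2|\,\xi_1\le\jap{\xi_1}\jap{\xi_2}$. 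Hence $|\widehat{Q_+(V)}(\xi)|\le\int \jap{\xi_1}|\hat V(\xi_1)|\,\jap{\xi_2}|\hat V(\xi_2)|\,d\xi_1=(f*f)(\xi)\le\|f\|_{L^2}^2=\|V\|_{H^1}^2$, using Cauchy--Schwarz for $\|f*f\|_{L^\infty}\le\|f\|_{L^2}^2$.

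The main obstacle is the cubic term $C_+(V)=-P_{+hi}(V_+\,P_-\partial_x(\bar V\partial_x V))$, which carries two derivatives distributed over three $H^1$ factors; a direct convolution estimate fails because the triple convolution of $L^2$ functions is not bounded in $L^\infty$. The gain comes from spending the weights carefully. With inner frequencies $\xi_2$ (on $\bar V$) and $\xi_3$ (on $\partial_x V$) and $\eta=\xi_2+\xi_3$, the symbol is $|\eta|\,|\xi_3|$. I transfer $|\eta|<\xi_1\le\jap{\xi_1}$ onto the top factor and bound $|\xi_3|\le\jap{\xi_3}$, so the integrand is controlled by $f(\xi_1)\,|\hat V(-\xi_2)|\,f(\xi_3)=f(\xi_1)\,\jap{\xi_2}^{-1}f(-\xi_2)\,f(\xi_3)$. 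Performing the $\xi_3$-integral first yields $(f*f)(\xi-\xi_2)\le\|f\|_{L^2}^2$, and the remaining $\xi_2$-integral is $\int \jap{\xi_2}^{-1}f(-\xi_2)\,d\xi_2\le\|\jap{\cdot}^{-1}\|_{L^2}\|f\|_{L^2}\lesssim\|V\|_{H^1}$ by Cauchy--Schwarz, the key point being that the leftover weight $\jap{\xi_2}^{-1}$ is square-integrable in one dimension.

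This gives $\|\widehat{C_+(V)}\|_{L^\infty_\xi}\lesssim\|V\|_{H^1}^3$. Summing the two contributions and taking the supremum in $t$ yields the claimed bound; the hardest point, as indicated, is organizing the cubic estimate so that exactly one surplus weight $\jap{\xi_2}^{-1}$ remains to close the integral, which requires both the support constraint $\xi_1>|\eta|$ to dispose of the outer derivative and the observation that the inner derivative $|\xi_3|$ is absorbed by a single $H^1$ weight.
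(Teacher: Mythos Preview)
Your proof is correct and follows essentially the same approach as the paper: both reduce to pointwise bounds on $\widehat{Q}_\pm$ and $\widehat{C}_\pm$, and both exploit the frequency constraint $\xi_1>\xi>1$, $\eta=\xi-\xi_1<0$ to transfer the outer derivative $|\eta|$ onto the high-frequency factor $\xi_1$. The only cosmetic difference is in the cubic term: the paper keeps $\bar V\partial_x V$ bundled and bounds $\|\bar V w\|_{L^2}\le\|V\|_{L^\infty}\|w\|_{L^2}\lesssim\|V\|_{H^1}^2$ via the physical-side Sobolev embedding, whereas you expand the triple convolution and close it with the square-integrability of $\jap{\xi_2}^{-1}$, which is exactly the Fourier-side form of the same embedding.
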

\begin{proof}
It suffices to prove that $\widehat{C}_\pm, \widehat{Q}_\pm \in L^\infty((0,T)\times \real)$. For $|\xi|<1$, $C\equiv Q \equiv 0$. For $|\xi|>1$, let us write $\widehat{Q}_+$:
$$
\widehat{Q}_+(V)=\int_{\xi_1>\xi>1} (\xi-\xi_1)\hat{V}(\xi_1)\hat{w}(\xi-\xi_1)d\xi_1 = \int_{\xi_1>\xi>1} \frac{\xi-\xi_1}{\xi_1}\left(\xi_1\hat{V}(\xi_1)\right)\hat{w}(\xi-\xi_1)d\xi_1 .
$$
Due to the restriction in the frequency space, the multiplier $(\xi-\xi_1)/\xi_1$ is bounded and
$$
\|\widehat{Q}_+(V)\|_{L^\infty((0,T)\times\real) }\lesssim \int |\xi_1\hat{V}(\xi_1)\hat{w}(\xi-\xi_1)|d\xi_1\lesssim \|V\|_{H^1}^2.
$$
The same argument applies for $Q_-$ and $C_\pm$.
\end{proof}

\section{The infinite normal form reduction}

In this section, we briefly explain the INFR method for general dispersive equations and how it can be used to prove nonlinear smoothing and uniqueness. We then conclude by stating the necessary adaptations to the Benjamin-Ono case. The discussion follows closely \cite{CorreiaSilva} and \cite{koy}.

For the sake of simplicity, let us start with a nonlinear dispersive equation with a single polynomial nonlinearity of order $k$,
$$
u_t + iL(D)u = N(u).
$$
Written in terms of the profile $\tilde{u}(t)=e^{-itL(D)}u(t)$,
\begin{align}
\tilde{u}(t,\xi) &= \tilde{u}(0,\xi) + \int_0^t \int_{\xi_1+\dots+\xi_k=\xi} e^{is\Phi(\Xi)}m(\Xi)\tilde{u}(s,\xi_1)\dots \tilde{u}(s,\xi_k)d\Xi ds\nonumber\\&=: \tilde{u}_0 + \int_0^t \mathcal{N}^{(1)}(\tilde{u}(s))ds\label{eq:geral}
\end{align}
We abuse the notation a bit, by forgetting possible complex conjugates of $\tilde{u}$. 

We introduce some operators which will be crucial for the subsequent analysis. Given $\sigma\in (0,1)$, define the \textit{phase-weighted operator} as
\begin{equation}
\mathcal{F}\left[T_\sigma(u_1,\dots,u_k)\right](\x)=\int_{\x_1+\dots + \x_k=\x} \frac{1}{\jap{\Phi(\Xi)}^\sigma}m(\Xi)\hat{u}_1(\x_1)\dots \hat{u}_k(\x_k) d\x_1\dots d\x_{k-1}.
\end{equation}
Moreover, for any $\alpha\in \real$ and $M>0$, define the \textit{frequency-restricted operator}
$$
\mathcal{F}[T^{\alpha,M}(u_1,\dots, u_k)](\xi)=\int_{\substack{\x_1+\dots + \x_k=\x\\|\Phi(\Xi)-\alpha|<M}}m(\Xi)\hat{u}_1(\xi_1)\dots \hat{u}_k(\xi_k)d\x_1\dots d\x_k,
$$
The main assumption is the existence of appropriate multilinear bounds for these operators.

\begin{assumpt}
	For $\epsilon>0$ fixed, there exists $\sigma\in (0,1)$ such that
	\begin{equation}\label{eq:basicaepsilon}\tag{Bound$_{\sigma,\epsilon}$}
	\|T_\sigma(u_1,\dots,u_k)\|_{H^{s+\epsilon}}\lesssim \prod_{j=1}^{k}\|u_j\|_{H^s}.
	\end{equation}
	Moreover, there exist $\gamma,\beta$ satisfying
	$$
	\beta>0,\quad  \theta:=1-\max\{\gamma+\beta, \sigma+\gamma\}>0.
	$$
	such that, for any $\alpha\in \real$ and $M>1$,
	\begin{equation}\label{eq:frequencyrestrictedgen}\tag{Bound$^{\alpha,M}$}
	\|T^{\alpha,M}(u_1,\dots,u_k)\|_{H^s}\lesssim  \sup\{\jap{\alpha}^{\gamma},M^{\gamma}\}M^{\beta}\prod_{j=1}^{k}\|u_k\|_{H^s}
	\end{equation}
\end{assumpt}

\begin{nb}\label{nota:alphaimplicasigma}
	Observe that we only require a gain of regularity for the phase-weighted operator, which is centered around $\Phi=0$. For the frequency-restricted one, the phase is centered around $\alpha$, which may create additional difficulties in achieving a higher regularity. This separation is often useful (see \cite{CorreiaSilva}). However, it may happen that even for the frequency-restricted operator, a gain is already possible:
	\begin{equation}\label{eq:frequencyrestrictedepsilon}\tag{Bound$^{\alpha,M}_\epsilon$}
	\|T^{\alpha,M}(u_1,\dots,u_k)\|_{H^{s+\epsilon}}\lesssim  \sup\{\jap{\alpha}^{\gamma},M^{\gamma}\}M^{\beta}\prod_{j=1}^{k}\|u_k\|_{H^s}
	\end{equation}
	In this case, one has freely (Bound$_{\sigma,\epsilon}$) for $\sigma>\gamma+\beta$. Indeed, decomposing dyadically in $\Phi$,
\begin{align*}
	\|T_\sigma(u_1,\dots,u_k)\|_{H^{s+\epsilon}}&\lesssim \sum_{M \mbox{ dyadic}} \frac{1}{M^{\sigma}}\|T^{0,M}(u_1,\dots,u_k)\|_{H^{s+\epsilon}} \\&\lesssim \sum_{M \mbox{ dyadic}} M^{\gamma+\beta-\sigma} \prod_{j=1}^{k}\|u_k\|_{H^s} \lesssim \prod_{j=1}^{k}\|u_k\|_{H^s}.
\end{align*}
If $2\gamma+\beta<1$, the conditions of Assumption 1 are directly verified.
\end{nb}

For a fixed $N>1$ (which will be determined later), let us split the frequency domain into the near-resonant and nonresonant regions, depending on whether $|\Phi|$ is smaller or greater than $N$. We use the subscripts $1$ for the near-resonant term and $2$ for the nonresonant one:
$$
\tilde{u}(t,\x)=\tilde{u}(0,\x) + \int_0^t \mathcal{N}_1^{(1)}(\tilde{u}(s)) +\mathcal{N}_2^{(1)}(\tilde{u}(s)) ds.
$$
Using \eqref{eq:basicaepsilon}, 

\begin{equation}\label{eq:primeirotermo}
\left\|\mathcal{F}^{-1}\left(\mathcal{N}_1^{(1)}(\tilde{u}(t))\right)\right\|_{H^{s+\epsilon}}\lesssim N^\sigma\|T_\sigma(\tilde{u},\dots,\tilde{u})(t)\|_{H^{s+\epsilon}}\lesssim N^\sigma\|u(t)\|_{H^s}^k.
\end{equation}
For $\mathcal{N}_2^{(1)}$, one integrates by parts in time, using the relation
$$
e^{is\Phi}=\partial_s\left(\frac{1}{i\Phi}e^{is\Phi}\right)
$$
so that
$$
\int_0^t \mathcal{N}_2^{(1)}(\tilde{u}(s)) ds = \left[\mathcal{N}_0^{(2)}(\tilde{u}(s)) \right]_{s=0}^{s=t} + \int_0^t \mathcal{N}^{(2)}(\tilde{u}(s)) ds.
$$
Due to the factor $1/\Phi$ and the restriction on the frequency domain, the boundary terms are easily bounded:
$$
\left\|\mathcal{F}^{-1}\left(\mathcal{N}_0^{(2)}(\tilde{u}(s))\right)\right\|_{H^{s+\epsilon}}\lesssim \frac{1}{N^{1-\sigma}}\|T_\sigma(\tilde{u},\dots,\tilde{u})(s)\|_{H^{s+\epsilon}}\lesssim N^{-1+\sigma}\|u\|_{H^s}.
$$
For the remainder $\mathcal{N}^{(2)}$, one uses \eqref{eq:geral} to replace $\tilde{u}_t$ and obtain a new oscillatory integral which is of order $2(k-1)+1$ in $\tilde{u}$. 

So far, we have rewritten \eqref{eq:geral} as
$$
\tilde{u}(t,\xi) = \tilde{u}(0,\xi) + \mbox{controllable terms} + \int_0^t \mathcal{N}^{(2)}(\tilde{u}(s))ds.
$$

This concludes the first step in the INFR. One may now apply a recursive algorithm to expand the remainder integral $\mathcal{N}^{(2)}$. Fix $0<\delta<\theta/\beta$ and the sequence
$$
c_j=(j+1)^{2/\theta},\quad j\in \mathbb{N}.
$$
\vskip10pt
\underline{At the $J$-th step,}
\vskip10pt
\noindent\textit{Step 1.} Split the frequency domain into
\begin{itemize}
	\item Near-resonant: $\mbox{|Phase|}<c_J\mbox{|Phase at step 1|}^{\delta}$;
	\item Nonresonant: $\mbox{|Phase|}>c_J\mbox{|Phase at step 1|}^{\delta}$;
\end{itemize}
and write $$\int_0^t\mathcal{N}^{(J+1)}(\tilde{u}(s))ds=\int_0^t\mathcal{N}_1^{(J+1)}(\tilde{u}(s))+\mathcal{N}_2^{(J+1)}(\tilde{u}(s))ds.
$$
\textit{Step 2.} Integrate by parts in time the nonresonant term:
$$
\int_0^t \mathcal{N}_2^{(J+1)}(\tilde{u}(s))ds = \left[\mathcal{N}_0^{(J+2)}(\tilde{u}(s))  \right]_{s=0}^{s=t} + \int_0^t \mathcal{R}^{(J+2)}(\tilde{u}(s))ds.
$$
\textit{Step 3.} Use \eqref{eq:geral} to replace all instances of $\tilde{u}_t$ in $\mathcal{R}$ in order to obtain an oscillatory integral $\mathcal{N}^{(J+2)}$ of order $(k-1)(J+2)-1$ in $\tilde{u}$.
\vskip5pt
\noindent\textit{Step 4.} Repeat the algorithm for $\mathcal{N}^{(J+2)}(\tilde{u})$.

\vskip10pt
After an infinite amount of steps, one formally obtains the normal form equation
\begin{equation}\label{eq:NFE}\tag{NFE}
\tilde{u}(t,\xi)=\tilde{u}(0,\xi) + \sum_{j\ge 2} \left[\mathcal{N}_0^{(j)}(\tilde{u}(s))\right]_{s=0}^{s=t} + \int_0^t \sum_{j\ge 1} \mathcal{N}_1^{(j)}(\tilde{u}(s))ds.
\end{equation}


The basic multilinear estimates \eqref{eq:basicaepsilon} and \eqref{eq:frequencyrestrictedgen} can be propagated throughout the algorithmic procedure.

\begin{lem}[$H^s$ bounds for the \eqref{eq:NFE}]\label{lem:HsboundsNFE}
	For any $J\ge 2$,
	$$
	\left\|\mathcal{F}^{-1}[\mathcal{N}_1^{(J)}(\tilde{u})]\right\|_{H^{s+\epsilon}} \lesssim N^{-\theta-\delta\theta(J-2)+\delta\beta}\|u\|_{H^s}^{J(k-1)+1},
	$$
	$$
	\left\|\mathcal{F}^{-1}[\mathcal{N}_0^{(J+1)}(\tilde{u})]\right\|_{H^{s+\epsilon}}\lesssim N^{-\theta-\delta\theta(J-2)+\delta(\beta-1)}\|u\|_{H^s}^{J(k-1)+1},
	$$
	$$
	\left\|\mathcal{F}^{-1}[\mathcal{N}_1^{(J)}(\tilde{u}) - \mathcal{N}_1^{(J)}(\tilde{v})]\right\|_{H^{s+\epsilon}} \lesssim N^{-\theta-\delta\theta(J-2)+\delta\beta}\left(\|u\|_{H^s}^{J(k-1)} + \|v\|_{H^s}^{J(k-1)}\right)\|u-v\|_{H^s}
	$$
	and
	$$
	\left\|\mathcal{F}^{-1}[\mathcal{N}_0^{(J+1)}(\tilde{u})- \mathcal{N}_0^{(J+1)}(\tilde{v})]\right\|_{H^{s+\epsilon}}\lesssim N^{-\theta-\delta\theta(J-2)+\delta(\beta-1)}\left(\|u\|_{H^s}^{J(k-1)} + \|v\|_{H^s}^{J(k-1)}\right)\|u-v\|_{H^s}.
	$$
\end{lem}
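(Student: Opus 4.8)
The four estimates are proved simultaneously by induction on $J$, propagating the basic bounds of Assumption~1 through Steps~1--3 of the algorithm. It is convenient to attach to each oscillatory integral produced by the recursion the following data: its order (the number of profile factors), the product of phase factors $\prod_{j}(i\Phi^{(j)})^{-1}$ generated by the successive integrations by parts, and the frequency restrictions, namely $|\Phi^{(1)}|>N$ inherited from the first nonresonant split together with the nested thresholds comparing $|\Phi^{(j)}|$ with $c_{j}|\Phi^{(1)}|^{\delta}$ coming from the later steps. The base case $J=2$ is exactly \eqref{eq:primeirotermo} and the boundary bound following it: a single integration by parts supplies the factor $|\Phi^{(1)}|^{-1}\lesssim N^{-1}$, and balancing it against the loss of \eqref{eq:frequencyrestrictedgen} on the inner layer produces the net power $N^{-\theta}$, to which the outermost near-resonant layer adds the residual $N^{\delta\beta}$.

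For the inductive step we estimate the $H^{s+\epsilon}$ norm of $\mathcal{N}_1^{(J)}$ by peeling the nested multilinear operators one layer at a time. On the outermost layer we extract the $\epsilon$-smoothing from the near-resonant region $\{|\Phi^{(J)}|<c_J|\Phi^{(1)}|^{\delta}\}$: this is the only place where the gain $\epsilon$ appears, the cost being a loss governed by the width $\sim c_J N^{\delta}$ of that region and recorded by the residual exponent $\delta\beta$ (this is where \eqref{eq:basicaepsilon}, or its frequency-restricted refinement from Remark~\ref{nota:alphaimplicasigma}, enters). Every inner layer is nonresonant and is treated by one integration by parts followed by \eqref{eq:frequencyrestrictedgen}: at the $j$-th such layer the gain $|\Phi^{(j)}|^{-1}$ is matched against the loss $\sup\{\jap{\alpha}^{\gamma},M^{\gamma}\}M^{\beta}$, where the centre $\alpha$ is the phase accumulated by the outer layers and the dyadic width satisfies $M\gtrsim c_j|\Phi^{(1)}|^{\delta}\gtrsim c_j N^{\delta}$. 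Summing the resulting geometric series in $M$ and using $\theta=1-\max\{\gamma+\beta,\sigma+\gamma\}>0$, each inner layer contributes a net factor
$$
\left(c_j N^{\delta}\right)^{-\theta}=(j+1)^{-2}\,N^{-\delta\theta},
$$
by the choice $c_j=(j+1)^{2/\theta}$. Collecting the base power $N^{-\theta}$, the $J-2$ inner layers, and the residual $N^{\delta\beta}$ yields the exponent $-\theta-\delta\theta(J-2)+\delta\beta$, while the $J(k-1)+1$ profile factors each contribute $\|u\|_{H^s}$. For $\mathcal{N}_0^{(J+1)}$ the argument is identical, except that the final step is an integration by parts rather than a near-resonant truncation, so the outermost contribution carries the extra phase factor $|\Phi^{(J)}|^{-1}\lesssim (c_J N^{\delta})^{-1}$; this replaces $N^{\delta\beta}$ by $N^{\delta(\beta-1)}$ and gives the second estimate.

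The two Lipschitz estimates follow from the same multilinear bounds together with the elementary telescoping identity
$$
\tilde{u}_1\cdots\tilde{u}_m-\tilde{v}_1\cdots\tilde{v}_m=\sum_{\ell=1}^{m}\tilde{u}_1\cdots\tilde{u}_{\ell-1}\,(\tilde{u}_\ell-\tilde{v}_\ell)\,\tilde{v}_{\ell+1}\cdots\tilde{v}_m,
$$
applied with $m=J(k-1)+1$. Each summand is estimated exactly as above, with the distinguished difference factor placed in its slot; since $\|\tilde{u}_\ell-\tilde{v}_\ell\|_{H^s}=\|u-v\|_{H^s}$ and all remaining factors are bounded by $\|u\|_{H^s}$ or $\|v\|_{H^s}$, the $m$ terms combine into $\big(\|u\|_{H^s}^{J(k-1)}+\|v\|_{H^s}^{J(k-1)}\big)\|u-v\|_{H^s}$, with the same powers of $N$ as before.

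The technical heart of the argument, and the point requiring the most care, is the bookkeeping of the nested phases. Two features make it delicate. First, only the outermost layer may invoke the smoothing bound; every inner layer is controlled purely in $H^s$, so the loss incurred there must be strictly beaten by the integration-by-parts gain, which is precisely the requirement $\theta>0$, and one must verify that the centre $\alpha$ and width $M$ fed into \eqref{eq:frequencyrestrictedgen} at each node are correctly inherited from the outer restrictions. Second, each replacement in Step~3 applies the product rule to a product of the current number of factors, so that $\mathcal{N}_1^{(J)}$ is a sum of roughly $(k-1)^{J}J!$ oscillatory integrals; the whole scheme is consistent only because the thresholds $c_j=(j+1)^{2/\theta}$ were chosen so that the accumulated weights $\prod_j(j+1)^{-2}\sim (J!)^{-2}$ dominate this factorial growth, leaving an implicit constant uniform in $J$ (and, ultimately, rendering the series in \eqref{eq:NFE} summable).
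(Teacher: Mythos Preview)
The paper does not actually prove this lemma; it is stated without proof, with the surrounding discussion explicitly deferring to \cite{CorreiaSilva} and \cite{koy} for the details of the INFR machinery. Your proposal correctly reconstructs the standard argument from those references: peel the nested multilinear operators layer by layer, extract the $\epsilon$-smoothing only once (at the near-resonant outermost layer) via \eqref{eq:basicaepsilon}, treat every nonresonant inner layer by pairing the integration-by-parts gain $|\Phi^{(j)}|^{-1}$ against the loss from \eqref{eq:frequencyrestrictedgen}, and use the choice $c_j=(j+1)^{2/\theta}$ so that the product $\prod_j c_j^{-\theta}$ absorbs the combinatorial growth in the number of trees. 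The exponent bookkeeping you give, $-\theta-\delta\theta(J-2)+\delta\beta$ for $\mathcal{N}_1^{(J)}$ and the extra $-\delta$ for $\mathcal{N}_0^{(J+1)}$, is correct, and the telescoping for the Lipschitz bounds is standard.

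Two small inaccuracies worth cleaning up. First, the sentence ``The base case $J=2$ is exactly \eqref{eq:primeirotermo} and the boundary bound following it'' is misleading: \eqref{eq:primeirotermo} and the boundary bound that follows it are the estimates for $\mathcal{N}_1^{(1)}$ and $\mathcal{N}_0^{(2)}$, which are \emph{not} covered by the lemma (note the powers $N^{\sigma}$ and $N^{-1+\sigma}$ there, versus $N^{-\theta+\delta\beta}$ here). The genuine base case $J=2$ already involves one nested substitution and requires one application of \eqref{eq:frequencyrestrictedgen} on the inner block together with the phase-weighted gain on the outer one; your subsequent sentence describes this correctly, so the issue is only the citation. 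Second, there is a harmless off-by-one in the thresholds: in the paper's indexing the near-resonant cut producing $\mathcal{N}_1^{(J)}$ occurs at step $J-1$ and uses $c_{J-1}$, not $c_J$. Neither point affects the validity of the argument.
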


\begin{nb}
	Since the estimates of Lemma \ref{lem:HsboundsNFE} decay exponentially in $J$ for $N\gg M:=\|u\|_{L^\infty((0,T),H^s)}$, one obtains a formal bound on \eqref{eq:NFE},
	\begin{align*}
	\|\mathcal{F}^{-1}[\tilde{u}(t)-\tilde{u}(0)]\|_{H^{s+\epsilon}} &\lesssim \sum_{J\ge 2} \left\|\mathcal{F}^{-1}[\mathcal{N}_0^{(J)}(\tilde{u}(t)) ]\right\|_{H^{s+\epsilon}} + \sum_{J\ge 2} \left\|\mathcal{F}^{-1}[\mathcal{N}_0^{(J)}(\tilde{u}(0)) ]\right\|_{H^{s+\epsilon}}\\&\qquad + \sum_{J\ge 1} \int_0^t \left\|\mathcal{F}^{-1}[\mathcal{N}_1^{(J)}(\tilde{u}(s)) ]\right\|_{H^{s+\epsilon}}ds\\&\lesssim \sum_{J\ge 2} N^{-\theta-\delta\theta(J-2)+\delta\beta}M^{(k-1)J+1} + TN^{1-\sigma}\\&\qquad +T\sum_{J\ge 2}N^{-\theta-\delta\theta(J-2)+\delta(\beta-1)}M^{(k-1)(J+1)+1}\\&\lesssim C\left(T,\|u\|_{L^\infty((0,T),H^s)}\right),
	\end{align*}
	which is precisely the nonlinear smoothing property. This can be shown to be valid for smooth initial data (such that $\tilde{u}_t\in H^s$). Therefore, if a nice well-posedness theory is available, one may argue by density and obtain the nonlinear smoothing result without any further considerations (this was the approach taken in \cite{CorreiaSilva}). However, for the Benjamin-Ono equation, due to the lack of a proper well-posedness result at low regularity, this argument only proves that solutions which can be approximated by smooth ones satisfy the nonlinear smoothing property.
\end{nb}

To make use of the formal computations, we have to justify the INFR procedure, which uses three properties:
\begin{enumerate}[label=(\subscript{P}{{\arabic*}})]
	\item  The product rule for the time derivative can be applied for a.e. $\xi\in \real$;
	\item  One can switch time derivatives and integrals in $\xi$;
	\item The remainder tends to zero as $J\to \infty$ (in some weaker norm).
\end{enumerate}
The first property can be ensured if, for a.e. $\xi\in \real$, $\tilde{u}_t(\cdot,\xi)\in L^\infty(0,T)$. As a rule of thumb, this can  be seen if $k<2d/(d-2s)^+$: since $u^k\in C((0,T),L^1(\real^d))$, by Riemann-Lebesgue Lemma, $\mathcal{F}(|u|^k)\in C((0,T)\times \real^d)$. Thus, for $\xi$ fixed, $\mathcal{F}(|u|^k)\in C((0,T))$. Since the difference between $|u|^k$ and $N(u)$ are absolute values and possibly spatial derivatives, this should be enough to see that  $\mathcal{F}(N(u))(\cdot,\xi)\in C((0,T))$. 

The second property can be justified at the $H^s$ level, for $s\ge 0$, following \cite[Remark 4.5]{koy}. To prove that the remainder tends to $0$, we make the following
\begin{assumpt}
	Take $\gamma, \beta$ and $\sigma$ as in Assumption 1. There exists $\mu\in \real$ such that 
	\begin{equation}\label{eq:definormamu}
	\| \jap{\cdot}^{-\mu} \tilde{u}_t\|_{L^\infty}<\infty.
	\end{equation}
	Moreover, one has the following weak phase-weighted and frequency-restricted estimates:
	\begin{equation}\label{eq:basicaepsilonfraca}
	\| \jap{\cdot}^{-\mu} T_\sigma(u_1,\dots,u_k)\|_{L^\infty}\lesssim \min_j\left\{ \|\jap{\cdot}^{-\mu} u_j \|_{L^\infty}  \prod_{l\neq j}\|u_l\|_{H^s}\right\}
	\end{equation}
	\begin{equation}\label{eq:freqrestrfraca}
	\| \jap{\cdot}^{-\mu} T^{\alpha,M}(u_1,\dots,u_k)\|_{L^\infty}\lesssim (|\alpha|+M)^{\gamma}M^\beta \min_j\left\{ \|u_j\|_{L^\infty}  \prod_{l\neq j}\|u_l\|_{H^s}\right\}.
	\end{equation}
\end{assumpt}

Notice that, at each step of the INFR, the remainder term $\mathcal{R}$ is essentially the boundary term $\mathcal{N}_0$ with one $\tilde{u}$ replaced by $\tilde{u}_t$. Then, applying the weak phase-weighted and frequency-restricted estimates, we have
\begin{lem}
	For any $J\ge 2$,
	$$
	\| \jap{\cdot}^{-\mu} \mathcal{R}^{(J+2)}(\tilde{u})\|_{L^\infty}\lesssim N^{-\theta-\delta\theta(J-2)+\delta(\beta-1)}\|u\|_{H^s}^{J(k-1)}\| \jap{\cdot}^{-\mu}\tilde{u}_t\|_{L^\infty}.
	$$
\end{lem}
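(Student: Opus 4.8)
The plan is to reproduce the derivation of the boundary-term bounds of Lemma \ref{lem:HsboundsNFE}, with the strong multilinear estimates replaced throughout by their weak counterparts from Assumption 2. As noted just before the statement, $\mathcal{R}^{(J+2)}$ coincides with a boundary term $\mathcal{N}_0$ of the type controlled in Lemma \ref{lem:HsboundsNFE}, except that a single factor $\tilde{u}$ has been replaced by $\tilde{u}_t$. In particular it is built from the same nested composition of phase-weighted operators $T_\sigma$ and frequency-restricted operators $T^{\alpha,M}$, it carries the same accumulated phase factors $1/\Phi$ produced by the successive integrations by parts, and it is subject to the same cascade of resonance restrictions $|\Phi|<c_j\jap{\Phi_1}^\delta$. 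Since the distinguished factor $\tilde{u}_t$ does not alter any of these structural features, the $N$-bookkeeping is unchanged, and I expect the power $N^{-\theta-\delta\theta(J-2)+\delta(\beta-1)}$ to appear exactly as stated.

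Concretely, I would fix the node of the nested operator occupied by $\tilde{u}_t$ and peel off the composition from the outside inward. At each level I apply the weak phase-weighted estimate \eqref{eq:basicaepsilonfraca} or the weak frequency-restricted estimate \eqref{eq:freqrestrfraca}, in each case choosing the index $j$ realizing the minimum to be the sub-block that contains $\tilde{u}_t$. That sub-block keeps the weight $\jap{\cdot}^{-\mu}$, is measured in $L^\infty$, and is passed to the next level of the recursion, while all of its sibling sub-blocks are measured in $H^s$ and bounded, exactly as in Lemma \ref{lem:HsboundsNFE}, by the strong estimates \eqref{eq:basicaepsilon} and \eqref{eq:frequencyrestrictedgen}. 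Iterating down to the innermost node, the distinguished factor is $\tilde{u}_t$ itself, contributing the finite quantity $\|\jap{\cdot}^{-\mu}\tilde{u}_t\|_{L^\infty}$ provided by \eqref{eq:definormamu}, while the $J(k-1)$ sibling factors each yield one power of $\|u\|_{H^s}$.

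The reason the powers of $N$ match those of Lemma \ref{lem:HsboundsNFE} is that the weak estimates carry precisely the same exponents $\gamma,\beta,\sigma$ as the strong ones: the right-hand side of \eqref{eq:freqrestrfraca} grows like $(|\alpha|+M)^\gamma M^\beta$, comparable to $\sup\{\jap{\alpha}^\gamma,M^\gamma\}M^\beta$ in \eqref{eq:frequencyrestrictedgen}, and the $\sigma$-gain in \eqref{eq:basicaepsilonfraca} comes from the identical weight $1/\jap{\Phi}^\sigma$. Hence the dyadic summation in the phase and the repeated use of the thresholds $c_j\jap{\Phi_1}^\delta$ accumulate the powers of $N$ in the same way, producing the geometric decay in $J$ and the exponent $-\theta-\delta\theta(J-2)+\delta(\beta-1)$.

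I expect the main difficulty to be organizational rather than analytic: one must verify that the index $j$ achieving the minimum can be chosen consistently at every level, so that the weight $\jap{\cdot}^{-\mu}$ always follows the unique branch leading to $\tilde{u}_t$, and that transferring this weight across the composition of operators costs nothing beyond the $N$-powers already tallied. Checking that this routing is compatible with the nested frequency restrictions inherited from the INFR, so that the weak estimates may be invoked at exactly the same nodes, with the same resonance windows, as the strong estimates in Lemma \ref{lem:HsboundsNFE}, is the delicate bookkeeping step; once it is established, the claimed bound follows from the same summation in $J$.
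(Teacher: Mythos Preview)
Your proposal is correct and follows exactly the approach the paper indicates: the paper does not give a detailed proof of this lemma but only the one-sentence remark preceding it, namely that $\mathcal{R}^{(J+2)}$ is the boundary term $\mathcal{N}_0^{(J+2)}$ with a single $\tilde{u}$ replaced by $\tilde{u}_t$, so that the weak estimates \eqref{eq:basicaepsilonfraca}--\eqref{eq:freqrestrfraca} can be applied in place of the strong ones. Your elaboration --- routing the $\jap{\cdot}^{-\mu}$-weighted $L^\infty$ norm along the unique branch leading to $\tilde{u}_t$ by choosing the minimizing index $j$ at each level, while bounding all sibling blocks in $H^s$ exactly as in Lemma~\ref{lem:HsboundsNFE} --- is precisely the intended argument, and your observation that the $N$-bookkeeping is unchanged because the weak estimates carry the same exponents $\gamma,\beta,\sigma$ is the reason the stated power of $N$ appears.
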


\begin{thm}\label{teo:geral}
	Fix $s\ge 0$.
	\begin{enumerate}
		\item Under Assumptions 1 and 2 (with $\epsilon=0$), if ($P_1$) holds, then equation \eqref{eq:geral} has a unique solution. Moreover, the data-to-solution map is Lipschitz continuous in $H^s$. 
		\item Given $\epsilon>0$, under Assumptions 1 and 2, if ($P_1$) holds, then equation \eqref{eq:geral} satisfies the nonlinear smoothing property of order $\epsilon$.
	\end{enumerate}
\end{thm}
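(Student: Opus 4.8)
The plan is to turn the formal expansion \eqref{eq:NFE} into a genuine identity in $H^s$ and then sum the bounds of Lemma \ref{lem:HsboundsNFE}. First I would justify the normal-form reduction. After $J$ steps, the algorithm of Section 3 produces, for fixed $\xi$, the exact identity
\[
\tilde u(t,\xi)=\tilde u(0,\xi)+\sum_{j=2}^{J+1}\left[\mathcal N_0^{(j)}(\tilde u(s))\right]_{s=0}^{s=t}+\int_0^t\sum_{j=1}^{J}\mathcal N_1^{(j)}(\tilde u(s))\,ds+\int_0^t\mathcal N^{(J+1)}(\tilde u(s))\,ds .
\]
Each integration by parts in $s$ and each substitution of $\tilde u_t$ via \eqref{eq:geral} is legitimate once $(P_1)$ holds, i.e.\ $\tilde u_t(\cdot,\xi)\in L^\infty(0,T)$ for a.e.\ $\xi$ (which follows from Riemann--Lebesgue as explained before the statement), while exchanging $\partial_s$ with the frequency integrals is $(P_2)$, justified at the $H^s$ level as in \cite{koy}. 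It then remains to pass to the limit $J\to\infty$, which is $(P_3)$: by the stated bound on $\mathcal R^{(J+2)}$ together with the finiteness \eqref{eq:definormamu}, the quantity $\|\jap{\cdot}^{-\mu}\mathcal N^{(J+1)}(\tilde u)\|_{L^\infty}$ decays geometrically in $J$ once $N$ is chosen large compared to $M:=\|u\|_{L^\infty((0,T),H^s)}$, so the last integral tends to $0$ in the weak norm and \eqref{eq:NFE} holds pointwise a.e.\ in $\xi$.

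For part 2 (nonlinear smoothing) I would fix $\epsilon>0$, work under the full Assumptions 1 and 2, and take $N\gg M$. The four families of bounds in Lemma \ref{lem:HsboundsNFE} then carry a prefactor $N^{-\theta-\delta\theta(J-2)+\cdots}M^{\cdots}$ summable in $J$; summing the $H^{s+\epsilon}$ norms of all terms of \eqref{eq:NFE} exactly as in the Remark following Lemma \ref{lem:HsboundsNFE} gives
\[
\|\tilde u(t)-\tilde u(0)\|_{H^{s+\epsilon}}\le C\big(T,\|u\|_{L^\infty((0,T),H^s)}\big).
\]
Since the free propagator is unitary on $H^{s+\epsilon}$ and $\tilde u(0)=u_0$, the left-hand side equals the $H^{s+\epsilon}$ norm of $u(t)$ minus the free evolution of $u_0$, which is the nonlinear smoothing of order $\epsilon$. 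The convergence of this series in $H^{s+\epsilon}$, matched against the weak-norm convergence established above, makes both the identity and the bound rigorous.

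For part 1 (with $\epsilon=0$) I would read \eqref{eq:NFE} as a fixed-point equation $\tilde u=\Gamma(\tilde u)$ on a ball of $C([0,T],H^s)$: Lemma \ref{lem:HsboundsNFE} with $\epsilon=0$ shows that $\Gamma$ maps a suitable ball into itself and, through its difference version, is a contraction once $N$ is large relative to the radius, yielding existence and uniqueness (after checking that the fixed point does solve \eqref{eq:geral}). Lipschitz dependence then follows directly: subtracting the normal-form equations for two solutions $u,v$ and applying the difference bounds of Lemma \ref{lem:HsboundsNFE}, the terminal boundary terms carry a coefficient $\le\tfrac12$ for $N$ large and are absorbed on the left, the initial boundary terms produce $\|\tilde u(0)-\tilde v(0)\|_{H^s}$, and the integral terms are closed by Gronwall, giving
\[
\sup_{t\in[0,T]}\|\tilde u(t)-\tilde v(t)\|_{H^s}\lesssim C(T,M)\,\|\tilde u(0)-\tilde v(0)\|_{H^s}.
\]
As the propagator is unitary, profiles agree with $u$ and $u_0$ in norm, so this is exactly Lipschitz continuity of the data-to-solution map; equal data gives uniqueness again.

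The main obstacle is the rigorous closure of the infinite iteration, i.e.\ property $(P_3)$. The estimates that make the series in \eqref{eq:NFE} converge live in $H^{s+\epsilon}$, but the $J$-th remainder carries $\sim J(k-1)$ factors of $\tilde u$ and cannot be controlled there without extra regularity; it can only be bounded in the weak norm $\|\jap{\cdot}^{-\mu}\cdot\|_{L^\infty}$, at the price of one factor $\tilde u_t$ whose finiteness is precisely \eqref{eq:definormamu}. Reconciling these two scales --- geometric decay of the remainder in the weak topology against convergence of the retained series in the strong topology --- is the heart of the argument, and it is exactly what the weak estimates \eqref{eq:basicaepsilonfraca}--\eqref{eq:freqrestrfraca} of Assumption 2 are designed to supply. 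A secondary technical point is that a single $N$, large relative to $M$, must simultaneously drive the geometric decay in $J$, the $\tfrac12$-absorption of the terminal boundary terms, and the Gronwall step; this is possible because $M<\infty$ on $[0,T]$.
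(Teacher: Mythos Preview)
Your argument is essentially the paper's: justify \eqref{eq:NFE} through $(P_1)$--$(P_3)$ using Assumption~2 for the remainder, then read off nonlinear smoothing from the $H^{s+\epsilon}$ bounds of Lemma~\ref{lem:HsboundsNFE} and Lipschitz/uniqueness from its difference bounds with $N\gg M$. That matches the paper's proof exactly.

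The one place you diverge is in reading part~1 as an existence-plus-uniqueness statement and attacking it by a contraction on $\Gamma$. The paper does not do this: it treats part~1 purely as uniqueness for \emph{given} solutions $u,v\in C([0,T],H^s)$, derives \eqref{eq:NFE} for each, subtracts, and absorbs. Your parenthetical ``after checking that the fixed point does solve \eqref{eq:geral}'' is precisely the nontrivial direction---undoing the infinite normal form---and it is not covered by the machinery at hand (in particular, Assumption~2 presupposes a solution of \eqref{eq:geral} with $\tilde u_t$ controlled, which a mere fixed point of $\Gamma$ does not yet satisfy). This is not a gap in the theorem as intended, only an overreach in interpretation; drop the contraction step and your proof coincides with the paper's.
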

\begin{proof}
	We start with the uniqueness statement. Given two solutions $u,v \in C([0,T], H^s(\real^d))$, the hypothesis ensure that both solutions satisfy \eqref{eq:NFE}. Applying the Lipschitz estimates of Lemma \ref{lem:HsboundsNFE}, for $N\gg \|u\|_{L^\infty((0,T), H^s)} + \|v\|_{L^\infty((0,T), H^s)}$, one has
	$$
	\|u-v\|_{L^\infty((0,T), H^s)}\lesssim \|u_0-v_0\|_{H^s}
	$$
	and the first claim follows. Given $\epsilon>0$, by hypothesis, the formal bounds from Lemma \ref{lem:HsboundsNFE} can be rigorously justified. The nonlinear smoothing property now follows from Remark 4.
\end{proof}

\begin{nb}
	All of the above considerations remain valid for systems of several dispersive equations and several nonlinear terms: one requires phase-weighted and frequency-restricted estimates for each term separately, for a uniform choice of parameters $\gamma, \beta$ and $\sigma$. 
\end{nb}

\begin{nb}
The equations for  $V_\pm$ involve the low-frequency part  $V_{lo}$. As time derivatives fall onto this factor, \textit{we do not replace it by its evolution equation}. Instead, we use the fact that it is bounded in $H^s$, which is enough for the application of the iterated estimates. For these terms, no further reduction is necessary. A similar strategy was taken in \cite{CorreiaSilva} for the derivative nonlinear Schrödinger equation.
\end{nb}

\section{Proof of the main results}

	In the context of the Benjamin-Ono equation, property (\subscript{P}{1}) follows directly from Lemma \ref{lem:derivadaslimitadas}. Therefore, by Theorem \ref{teo:geral}, our work is reduced to the proof of bounds \eqref{eq:frequencyrestrictedgen} and \eqref{eq:basicaepsilon} (in both strong and weak forms). Following Remark \ref{nota:alphaimplicasigma}, we further reduce our problem by showing the stronger version \eqref{eq:frequencyrestrictedepsilon} for each nonlinear term.
\begin{lem}[Frequency-restricted estimate for the cubic nonlinearities]\label{lem:cubic}
For $s>0$ and $0\le\epsilon<\min\{s,3/4\}$, set $$\gamma(\epsilon)=\max\left\{\frac{1}{4}+\frac{\epsilon-s}{2}, \epsilon-\frac{1}{2}, 0\right\}.
$$
One has
\begin{equation}\label{eq:estimcubicforte}
	\left\|T^{\alpha,M}[C_\pm](V_1,V_2,V_3) \right\|_{H^{s+\epsilon+1}}\lesssim (|\alpha|+ M)^{\gamma(\epsilon)}M^{1/2}\|V_1\|_{H^{s+1}}\|V_2\|_{H^{s+1}}\|V_3\|_{H^{s+1}}
\end{equation}
\begin{align}
&\left\|T^{\alpha,M}[C_\pm](V_1,V_2,V_3) \right\|_{L^\infty}\lesssim (|\alpha|+ M)^{\gamma(0)}M^{1/2}\label{eq:estimcubicfraco} \min_j\left\{ \|\hat{V}_j\|_{L^\infty}\prod_{k\neq j}\|V_k\|_{H^{s+1}}\right\}
\end{align}
\end{lem}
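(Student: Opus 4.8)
The plan is to reduce both \eqref{eq:estimcubicforte} and \eqref{eq:estimcubicfraco} to a pointwise bound on a weighted multiplier and then to convert the frequency restriction $|\Phi-\alpha|<M$ into the gains $M^{1/2}$ and $(|\alpha|+M)^{\gamma(\epsilon)}$ through a Cauchy--Schwarz argument in which one frequency variable is traded for the phase. First I would write $C_+$ on the Fourier side. Recalling $w=\partial_xV$ and $\widehat{\bar V}(\xi_2)=\overline{\hat V(-\xi_2)}$, the two derivatives coming from $P_-\partial_x(\,\cdot\,)$ and from $w$ produce the symbol $|m|=|\xi_2+\xi_3|\,|\xi_3|=|\eta|\,|\xi_3|$, with $\eta:=\xi_2+\xi_3$, while substituting the profiles gives the phase $\Phi(\Xi)=\xi|\xi|-\xi_1|\xi_1|-\xi_2|\xi_2|-\xi_3|\xi_3|$. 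The projections force the sign constraints $\xi>1$, $\xi_1>1$ (from $V_+=P_{+hi}V$) and $\eta<0$ (from $P_-$), which yield the two structural facts on which everything rests: $\xi_1=\xi+|\eta|$, so that $\xi\le\xi_1$ and $|\eta|\le\xi_1$; and, after using the signs, $\Phi$ collapses to a product of frequency differences, so that in the main regime one has $|\Phi|\gtrsim|\eta|\,\xi_1$. The case of $C_-$ is identical by the reflection $\xi\mapsto-\xi$.

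Substituting $\hat V_j(\xi_j)=\jap{\xi_j}^{-(s+1)}g_j(\xi_j)$ with $\|g_j\|_{L^2}=\|V_j\|_{H^{s+1}}$, estimate \eqref{eq:estimcubicforte} reduces to an $L^2\to L^2$ bound for the kernel
$$
K(\xi,\xi_1,\xi_2)=\frac{\jap{\xi}^{s+\epsilon+1}|\eta|\,|\xi_3|}{\jap{\xi_1}^{s+1}\jap{\xi_2}^{s+1}\jap{\xi_3}^{s+1}}\mathbbm{1}_{|\Phi-\alpha|<M},\qquad \xi_3=\xi-\xi_1-\xi_2.
$$
The factor $|\xi_3|$ is absorbed by $\jap{\xi_3}^{s+1}$ and $|\eta|\lesssim\max\{\jap{\xi_2},\jap{\xi_3}\}$ by one of the remaining weights; when the output frequency is comparable to $\xi_1$ one bounds $\jap{\xi}\le\jap{\xi_1}$ and is left with the deficit $\jap{\xi_1}^{\epsilon}\jap{\xi_2}^{-s}\jap{\xi_3}^{-s}$, whereas in the regime of near-cancellation $\xi=\xi_1-|\eta|$ the smallness of $\jap{\xi}$ already pays for the two derivatives directly. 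The key step is then the change of variables $\xi_1\mapsto\Phi$ at fixed $\xi_2$, whose Jacobian is $|\partial_{\xi_1}\Phi|=2\,|\xi_1-|\xi_3||$: integrating the indicator over an interval of length $2M$ in $\Phi$ and applying Cauchy--Schwarz produces $M^{1/2}$, while the bound $\jap{\xi_1}\lesssim|\Phi|/|\eta|\le(|\alpha|+M)/|\eta|$ on the restricted support turns the surviving powers of $\jap{\xi_1}$ into powers of $(|\alpha|+M)$.

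The exponent $\gamma(\epsilon)=\max\{\tfrac14+\tfrac{\epsilon-s}{2},\,\epsilon-\tfrac12,\,0\}$ then emerges from a case analysis over the relative sizes of $\xi_1,\xi_2,\xi_3$ and the signs of $\xi_2,\xi_3$. When $\xi_1$ stays bounded there is no loss and $\gamma=0$. When the output is genuinely high, using $|\partial_{\xi_1}\Phi|\sim\xi_1$ gives a measure gain $(M/\xi_1)^{1/2}$, so the deficit becomes $M^{1/2}\xi_1^{\epsilon-1/2}$; for $\epsilon>1/2$ one inserts $\xi_1\lesssim|\alpha|+M$ to obtain the term $\epsilon-\tfrac12$. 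In the balanced opposite-sign regime $|\xi_2|\sim|\xi_3|\sim|\eta|$ one cannot use $\jap{\xi_2}^{-s}\jap{\xi_3}^{-s}$ pointwise and must distribute the $s$-smoothing across a second Cauchy--Schwarz, which produces the $\tfrac14$ together with the factor $\tfrac{\epsilon-s}{2}$. The weak estimate \eqref{eq:estimcubicfraco} is handled along identical lines, replacing the $L^2$ norm of one distinguished factor $g_j$ by $\|\hat V_j\|_{L^\infty}$ and estimating the other two in $L^2$; since the measure input from $|\Phi-\alpha|<M$ is unchanged, one recovers $(|\alpha|+M)^{\gamma(0)}M^{1/2}$.

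The main obstacle I anticipate is the degenerate regime $\xi_1\sim|\eta|$ --- equivalently $\xi=\xi_1-|\eta|$ small with large internal frequencies --- where the product structure of $\Phi$ degenerates, the lower bound $|\Phi|\gtrsim|\eta|\,\xi_1$ can fail, and the Jacobian $|\xi_1-|\xi_3||$ may vanish. Controlling this region requires splitting according to whether $|\xi_1-|\xi_3||$ is large or small and, in the latter case, changing variables in $\xi_2$ instead, using $\partial_{\xi_2}\Phi=2(|\xi_3|-|\xi_2|)$, together with the observation that smallness of one derivative gap forces the other to be comparable to $\xi_1$. The delicate bookkeeping is to verify that the worst configuration is exactly matched by $\gamma(\epsilon)$, and that the hypothesis $\epsilon<\min\{s,3/4\}$ is precisely what forces $\gamma(\epsilon)<1/2$, hence $\gamma+\beta<1$ with $\beta=1/2$, so that Assumption~1 is met and the INFR of Theorem~\ref{teo:geral} applies.
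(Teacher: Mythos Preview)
Your high–level strategy agrees with the paper's: reduce via duality and Cauchy--Schwarz to a supremum over $\xi$ of a weighted integral, then convert the restriction $|\Phi-\alpha|<M$ into the factors $M^{1/2}$ and $(|\alpha|+M)^{\gamma(\epsilon)}$ by trading one frequency variable for $\Phi$. Where you diverge from the paper is in the organisation of the case analysis and, more seriously, in the mechanism you invoke for two of the three pieces of $\gamma(\epsilon)$.

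In the paper, the decomposition is by the relative size of $|\xi_1|$ against $|\xi_2|,|\xi_3|$: Region~A ($|\xi_1|\ll|\xi_2|,|\xi_3|$), Region~B (impossible), Region~C ($|\xi_1|\gtrsim|\xi_2|,|\xi_3|$), the latter split into Case~I ($|\xi_2|\gtrsim|\xi_3|$) and Case~II ($|\xi_2|\ll|\xi_3|$). The piece $\epsilon-\tfrac12$ does \emph{not} come from the blunt substitution $\xi_1\lesssim|\alpha|+M$ you propose --- that bound is false when $|\eta|$ is small, since only $\xi_1\lesssim(|\alpha|+M)/|\eta|$ holds. Instead, in Case~I with $|\xi_2|\ll|\xi|$ the paper uses the lower bound $|\Phi|\gtrsim|\xi(\xi_2+\xi_3)|$ and writes $|\xi|^{2\epsilon}|\xi_2+\xi_3|^2\lesssim|\Phi|^{a}\,|\xi|^{2\epsilon-a}|\xi_2+\xi_3|^{2-a}$ with $a=\max\{2\epsilon-1,0\}$, which is what produces $(|\alpha|+M)^{\epsilon-1/2}$ after the change of variables.

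Likewise, the exponent $\tfrac14+\tfrac{\epsilon-s}{2}$ does \emph{not} arise from a ``balanced opposite--sign regime $|\xi_2|\sim|\xi_3|\sim|\eta|$'' via a second Cauchy--Schwarz. In the paper it appears in Case~II, where $|\xi_2|\ll|\xi_3|$ and $\xi_3<0$; there the phase simplifies to $\Phi\sim\xi\xi_3$, one changes variables with $|\partial_{\xi_1}\Phi|\sim|\xi|$ (now fixing $\xi_2$), and then applies the pointwise inequality $|\xi|^{2\epsilon-1}|\xi_3|^{2-2s}\le|\xi\xi_3|^{1/2+\epsilon-s}$, valid because $\epsilon<3/4$ forces $2\epsilon-1<2-2s$ for $s$ close to $\epsilon$. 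No second Cauchy--Schwarz enters.

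Two smaller points. First, the cubic phase does not globally factor as $|\Phi|\gtrsim|\eta|\,\xi_1$; that clean product is the quadratic phase $\Psi=2\xi\xi_2$ of Lemma~\ref{lem:quad}, whereas here one must treat separately the sign of $\xi_2,\xi_3$ and their relative sizes. Second, your ``degenerate regime $\xi_1\sim|\eta|$'' with vanishing Jacobian is essentially the paper's Region~A, handled simply by switching to $\partial_{\xi_2}\Phi\sim|\xi_2|$; it is not the source of the worst exponent.

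In short: the scaffolding is right, but the attribution of the two nonzero terms in $\gamma(\epsilon)$ rests on steps that do not go through as stated. Following the paper's Region~C, Cases~I and~II, with the two phase lower bounds $|\Phi|\gtrsim|\xi(\xi_2+\xi_3)|$ and $|\Phi|\sim|\xi\xi_3|$ respectively, closes the argument without any additional Cauchy--Schwarz.
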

\begin{proof}
	We begin with \eqref{eq:estimcubicforte}. By duality and Cauchy-Schwarz, the estimate is reduced to a bound on
	$$
	I^{\alpha,M}_\epsilon=\int_{\xi>\xi_1>1} \frac{\jap{\xi}^{2s+2\epsilon+2}|\xi_2+\xi_3|^2   }{ \xi_1^2\jap{\xi_1}^{2s}\jap{\xi_2}^{2s+2} \jap{\xi_3}^{2s}}\mathbbm{1}_{|\Phi-\alpha|<M}d\xi_1d\xi_2 
	$$
	where
	$$
	\Phi=|\xi|\xi - |\xi_1|\xi_1 + |\xi_2|\xi_2 -|\xi_3|\xi_3.
	$$
	Since $\jap{\xi}\le \jap{\xi_1}\jap{\xi_2}\jap{\xi_3}$, we may suppose that $s$ is as close to $\epsilon$ as we desire.
	
	Region A: $|\xi_1|\ll |\xi_2|, |\xi_3|$. Then $|\xi_2|\sim |\xi_3|\gg |\xi|$ and, writing $\xi_1=\xi-\xi_2-\xi_3$,
	$$
	|\partial_{\xi_2}\Phi| = 2||\xi_1|+|\xi_2||\sim |\xi_2|.
	$$
	Hence
	\begin{align*}
	I^{\alpha,M}_\epsilon \lesssim \int \frac{|\xi_2|^{2+2\epsilon}}{\jap{\xi_2}\jap{\xi_3}^{4s+1}}\mathbbm{1}_{|\Phi-\alpha|<M} \frac{d\Phi}{|\xi_2|}d\xi_3 \lesssim M.
	\end{align*}
	
	Region B: $|\xi_2|\lesssim |\xi_1|\ll |\xi_3|$ or $|\xi_3|\lesssim |\xi_1|\ll |\xi_2|$. This implies that $|\xi|\gg |\xi_1|$, which is impossible.
	
	Region C: $|\xi_1|\gtrsim |\xi_2|, |\xi_3|$.  Then, writing $\xi_2=\xi-\xi_1-\xi_3$,
	$$
	|\partial_{\xi_1}\Phi|\gtrsim |\xi_1|.
	$$
	Case I. $|\xi_2|\gtrsim |\xi_3|$. If $|\xi_2|\gtrsim |\xi|$,
	$$
	I^{\alpha,M}_\epsilon\lesssim \int_{\xi>\xi_1>1} \frac{\jap{\xi}^{2s+2\epsilon+2}|\xi_2|^2   }{ \xi_1^2\jap{\xi_1}^{2s}\jap{\xi_2}^{2s+2} \jap{\xi_3}^{2s}}\mathbbm{1}_{|\Phi-\alpha|<M}\frac{d\Phi}{|\xi_1|}d\xi_3 \lesssim \int_{\xi>\xi_1>1}\frac{1}{\jap{\xi_3}^{2s+1}} \mathbbm{1}_{|\Phi-\alpha|<M}d\Phi d\xi_3\lesssim M
	$$
	If $|\xi_2|\ll |\xi|$, since $|\partial_{\xi_1}\Psi|\gtrsim |\xi_1|\sim |\xi|$,
	$$
	|\Psi|\gtrsim |\xi(\xi_2+\xi_3)|.
	$$
Hence, for $a=\max\{2\epsilon-1,0\}$ and since $\epsilon<\min\{3/4, s\}$,
\begin{align*}
	I^{\alpha,M}_\epsilon&\lesssim \int \frac{|\xi|^{2\epsilon}|\xi_2+\xi_3|^2}{\jap{\xi_2}^{2s+1}\jap{\xi_3}^{2s+1}}\mathbbm{1}_{|\Phi-\alpha|<M} \frac{d\Phi}{|\xi_1|}d\xi_3\\&\lesssim  \int \frac{|\xi|^{2\epsilon-a}|\xi_2+\xi_3|^{2-a}|\Phi|^a}{\jap{\xi_2}^{2s+1}\jap{\xi_3}^{2s+1}}\mathbbm{1}_{|\Phi-\alpha|<M} \frac{d\Phi}{|\xi_1|}d\xi_3\\&\lesssim (|\alpha|+M)^a\int \frac{1}{\jap{\xi_3}^{2s+1}}\mathbbm{1}_{|\Phi-\alpha|<M} d\Phi d\xi_3 \lesssim (|\alpha|+M)^aM,
\end{align*}
	Case II. $|\xi_2|\ll |\xi_3|$. Then $\xi_3<0$. If $|\xi|\lesssim |\xi_2|$, we proceed as in Case I.
	Otherwise, since
	$$
	\Phi = \xi^2 -(\xi-\xi_2-\xi_3)^2 \pm \xi_2^2 + \xi_3^2 \sim \xi\xi_3,
	$$
	we have $|\xi\xi_3|^{1/2}\lesssim (|\alpha|+M)^{1/2}$. Writing $\xi_3=\xi-\xi_1-\xi_2$,
	$$
	|\partial_{\xi_1}\Phi|\sim ||\xi_1|-|\xi_3||=|\xi_1+\xi_3|\sim |\xi|.
	$$
	Therefore, since $2\epsilon-1<2-2s$,
	\begin{align*}
	I^{\alpha,M}_\epsilon&\lesssim \int \frac{|\xi|^{2s+2\epsilon+2}|\xi_3|^2}{|\xi_1|^{2+2s}\jap{\xi_3}^{2s} \jap{\xi_2}^{2s+2} } \mathbbm{1}_{|\Phi-\alpha|<M}d\xi_1 d\xi_2 \\&\lesssim \int \frac{|\xi|^{2s+2\epsilon+1}|\xi_3|^2}{|\xi_1|^{2+2s}|\xi_3|^{2s} \jap{\xi_2}^{2s+2} } \mathbbm{1}_{|\Phi-\alpha|<M}d\Phi d\xi_2  \\&\lesssim \int \frac{|\xi|^{2\epsilon-1}|\xi_3|^{2-2s}}{\jap{\xi_2}^{2s+2} } \mathbbm{1}_{|\Phi-\alpha|<M}d\Phi d\xi_2  \\&\lesssim \int \frac{|\xi\xi_3|^{1/2+\epsilon-s}}{ \jap{\xi_2}^{2s+2} } \mathbbm{1}_{|\Phi-\alpha|<M}d\Phi d\xi_2 \lesssim (|\alpha|+M)^{1/2+\epsilon-s}M.
	\end{align*}
	For estimate \eqref{eq:estimcubicfraco}, we bound directly
\begin{align*}
	&\|T^{\alpha,M}[C_+](V_1,V_2,V_3)\|_{L^\infty}\lesssim \int_{\xi_1>\xi>1} \frac{|\xi_2+\xi_3|}{\xi_1}\mathbbm{1}_{|\Phi-\alpha|<M}\hat{w}_1(\xi_1)\hat{V}_2(\xi_2)\hat{w}_3(\xi_3)d\xi_1d\xi_2 \\&\lesssim \left(\int_{\xi_1>\xi>1}\frac{|\xi_2+\xi_3|^2\jap{\xi_1}^2}{|\xi_1|^2\jap{\xi_2}^{2s+2}\jap{\xi_3}^{2s} }\mathbbm{1}_{|\Phi-\alpha|<M} d\xi_1d\xi_2\right)^{1/2}\|V_1\|_{L^\infty}\|V_2\|_{H^{s+1}}\|V_3\|_{H^{s+1}}
\end{align*}
As the integral is quite similar to $I^{\alpha,M}_0$, the proof for \eqref{eq:estimcubicforte} easily extends to this case. The same argument can be applied for $C_-$ and for the other combinations of $L^\infty$ and $H^s$ norms.
\end{proof}
\begin{lem}[Frequency-restricted estimate for the quadratic nonlinearities]\label{lem:quad}
	For $s\ge 0$ and $0\le\epsilon<\min \{s,3/4\}$, set
	$$
	\gamma(\epsilon)=\max\left\{\frac{1}{2}+\epsilon-s, 0\right\}.
	$$
	Then
	\begin{equation}\label{eq:estimquadforte}
	\left\|T^{\alpha,M}[Q_\pm](V_1,V_2) \right\|_{H^{s+\epsilon+1}}\lesssim (|\alpha|+ M)^{\gamma(\epsilon)}M^{1/2}\|V_1\|_{H^{s+1}}\|V_2\|_{H^{s+1}}
	\end{equation}
	\begin{align}
	&\left\|T^{\alpha,M}[Q_\pm](V_1,V_2) \right\|_{L^\infty}\lesssim (|\alpha|+ M)^{\gamma(0)}M^{1/2}\label{eq:estimquadfraco} \min\left\{ \|\hat{V}_1\|_{L^\infty}\|V_2\|_{H^{s+1}}, \|V_1\|_{H^{s+1}}\|\hat{V}_2\|_{L^\infty} \right\}
	\end{align}
\end{lem}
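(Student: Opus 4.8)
The plan is to follow the template of the cubic case (Lemma~\ref{lem:cubic}), taking advantage of the fact that the quadratic resonance function is completely explicit. Writing $\xi=\xi_1+\xi_2$ with $\xi_1>1$ the frequency of the undifferentiated factor $V_+$ and $\xi_2<0$ the frequency of $P_-\partial_x^2V$, the symbol of $Q_+$ is $\xi_2^2$ (both derivatives fall on the $P_-$ factor) and the phase is
\[
\Phi=|\xi|\xi-|\xi_1|\xi_1-|\xi_2|\xi_2=2\xi\xi_2,\qquad \partial_{\xi_1}\Phi=-2\xi .
\]
Thus $\xi_1\mapsto\Phi$ is a change of variables with constant Jacobian $2\xi$; this is the main simplification over the cubic case, where $|\partial\Phi|$ had to be estimated region by region. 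The restriction $|\Phi-\alpha|<M$ then becomes $|\xi_2|\lesssim(|\alpha|+M)/\xi$ on a set of measure $M/\xi$.

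For the strong bound \eqref{eq:estimquadforte} I would argue exactly as in Lemma~\ref{lem:cubic}: by duality and Cauchy--Schwarz it reduces to a uniform-in-$\xi$ estimate for
\[
J^{\alpha,M}_\epsilon=\int \frac{\jap{\xi}^{2s+2\epsilon+2}\,\xi_2^4}{\jap{\xi_1}^{2s+2}\jap{\xi_2}^{2s+2}}\,\mathbbm{1}_{|\Phi-\alpha|<M}\,d\xi_1,
\]
and the inequality $\jap{\xi}\le\jap{\xi_1}\jap{\xi_2}$ again lets me reduce to $s$ close to $\epsilon$. I would split into the regime $|\xi_2|\lesssim\xi$ (so $\jap{\xi_1}\sim\jap{\xi}$, where cancelling the output weight leaves $\jap{\xi}^{2\epsilon}\xi_2^4\jap{\xi_2}^{-2s-2}$, integrated against the measure $M/\xi$) and the high-high-to-low regime $\xi_1\sim|\xi_2|\gg\xi$. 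In the second regime both Sobolev weights are of size $|\xi_2|^{2s+2}$, yielding genuine decay $|\xi_2|^{-4s}$, and one uses that the restriction forces $\xi\lesssim(|\alpha|+M)^{1/2}$ to trade the surplus powers of $\xi$ for the allowed factor $(|\alpha|+M)^{\gamma(\epsilon)}$. The case $Q_-$ follows by the symmetry $\xi\mapsto-\xi$.

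The delicate point is the weak bound \eqref{eq:estimquadfraco}. Pulling out the $L^\infty$ factor and applying Cauchy--Schwarz to the other reduces matters to an integral of the same form but carrying only a \emph{single} Sobolev weight. In the output-comparable regime this is harmless, but in the high-high-to-low regime the symbol $\xi_2^2$ has size $|\xi_2|^2\sim((|\alpha|+M)/\xi)^2$, and one weight alone produces $|\xi_2|^{2-2s}$, which grows. This is exactly where the absence of a conjugate makes $Q_\pm$ harder than $C_\pm$: for the cubic the conjugated factor keeps the phase non-degenerate in the high-high sector (Region~A of Lemma~\ref{lem:cubic}), whereas here $\partial_{\xi_1}\Phi=-2\xi$ degenerates precisely when the output sits at the low-frequency cutoff. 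Extracting the missing half-power of $(|\alpha|+M)$ needed to reach $\gamma(0)=\max\{1/2-s,0\}$ --- presumably by retaining the oscillation $\Phi=2\xi\xi_2$ rather than discarding it, and/or by exploiting additional structure of the time-derivative factor (which is localized to $\{|\xi|>1\}$ by Lemma~\ref{lem:derivadaslimitadas}) --- is the step I expect to demand the most work.
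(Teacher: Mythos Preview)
Your treatment of the strong bound \eqref{eq:estimquadforte} is correct and matches the paper's. Two cosmetic differences: the paper writes the reduced integral with $\xi_2^2/\jap{\xi_2}^{2s}$ (one derivative absorbed into $w=\partial_xV$) instead of your $\xi_2^4/\jap{\xi_2}^{2s+2}$, and rather than splitting into two regimes it assumes without loss of generality $|\xi|>|\xi_2|$ and uses a single interpolation step: after the change of variables $d\xi_2=d\Psi/(2\xi)$ and $\xi_1\ge\xi$, the integrand is $|\xi|^{2\epsilon-1}|\xi_2|^{2-2s}\le|\xi\xi_2|^{1/2+\epsilon-s}=(|\Psi|/2)^{1/2+\epsilon-s}$ (valid because $2\epsilon-1<2-2s$ once $s$ is taken close to $\epsilon$). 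Your case split is a legitimate alternative, just slightly less economical.

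For the weak bound \eqref{eq:estimquadfraco} the paper does \emph{none} of the extra work you anticipate; its entire argument is the sentence ``The $L^\infty$ bound follows as for the cubic nonlinearity.'' The intended reading is: pull out one factor in $L^\infty$, Cauchy--Schwarz the other, and run the computation of $J^{\alpha,M}$ at $\epsilon=0$. Your instinct that the high-high-to-low sector is delicate is sharper than the paper's text. Once the $\jap{\xi_1}^{-2s-2}$ weight is removed, the integrand after the change of variables is $|\xi_2|^{2-2s}/\xi$, and since the output may sit at the cutoff $\xi\sim 1$ while $|\xi_2|\sim|\Psi|/\xi\lesssim|\alpha|+M$, the naive bound gives exponent $2-2s$ rather than the stated $\max\{1-2s,0\}$; the interpolation $|\xi|^{a}|\xi_2|^{b}\le|\xi\xi_2|^{(a+b)/2}$ that closes the strong estimate goes the wrong way here because $a=-1<b=2-2s$ and $|\xi|<|\xi_2|$. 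The paper does not address this (the same issue is present, and equally unaddressed, in Region~C Case~II of Lemma~\ref{lem:cubic}). So the difficulty you flag is real but is not resolved in the paper's own proof either; note also that your suggested remedies --- retaining the oscillation or invoking Lemma~\ref{lem:derivadaslimitadas} --- do not obviously recover the missing half-power.
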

\begin{proof}
Here, we are led to study the integral
$$
J^{\alpha,M}=\int_{\xi>\xi_1>1} \frac{\jap{\xi}^{2s+2\epsilon+2}\xi_2^2}{\xi_1^2\jap{\xi_1}^{2s}\jap{\xi_2}^{2s}} \mathbbm{1}_{|\Psi-\alpha|<M}d\xi_2,
$$
where $\Psi=\xi^2-\xi_1^2 +\xi_2^2=2\xi\xi_2$. Once again, we assume that $s$ is close to $\epsilon$. Without loss of generality, suppose that $|\xi|>|\xi_2|$. Since $|\partial_{\xi_2}\Psi|\sim |\xi|$,
\begin{align*}
J^{\alpha,M}&\lesssim \int_{\xi>\xi_1>1} \frac{\jap{\xi}^{2s+2\epsilon+1}\xi_2^2}{\xi_1^2\jap{\xi_1}^{2s}\jap{\xi_2}^{2s}} \mathbbm{1}_{|\Psi-\alpha|<M}d\Psi\\&\lesssim \int_{\xi>\xi_1>1} |\xi|^{2\epsilon-1}|\xi_2|^{2-2s} \mathbbm{1}_{|\Psi-\alpha|<M}d\Psi\\& \lesssim \int_{\xi>\xi_1>1} |\Psi|^{\frac{1}{2}+\epsilon-s} \mathbbm{1}_{|\Psi-\alpha|<M}d\Psi \lesssim (|\alpha|+M)^{\frac{1}{2}+\epsilon-s}M,
\end{align*}
where we use, in the penultimate estimate, the fact that $2\epsilon-1<2-2s$. The $L^\infty$ bound follows as for the cubic nonlinearity.
\end{proof}

\begin{proof}[Proof of Theorem \ref{teo:principal}]
The existence of solution and the definition of the gauge mapping are a consequence of the considerations of Section 2. Applying the INFR procedure and the estimates from Lemmas \ref{lem:cubic} and \ref{lem:quad}, the Lipschitz continuity, the nonlinear smoothing property and the uniqueness for the gauge transform $V=\mathcal{G}(u)$ follows. Finally, the uniqueness for $u$ follows from that of $V$.	
\end{proof}

\section{Acknowledgements}
The author would like to thank Jorge Silva and Felipe Linares for helpful suggestions and comments. The author was partially supported by Funda\c{c}\~ao para a Ci\^encia e Tecnologia, through the grant UID/MAT/04459/2019.

\bibliography{biblio}
\bibliographystyle{plain}

\begin{center}
	{\scshape Simão Correia}\\
	{\footnotesize
		Centro de Matemática, Aplicações Fundamentais e Investigação Operacional,\\
		Department of Mathematics,\\
		Instituto Superior T\'ecnico, Universidade de Lisboa\\
		Av. Rovisco Pais, 1049-001 Lisboa, Portugal\\
		simao.f.correia@tecnico.ulisboa.pt
	}
	
\end{center}

\end{document}